\declaretheorem[
name         = Theorem,
refname      = {Theorem, Theorems}, 
Refname      = {Theorem, Theorems},
numberwithin = section,
style        = plain,
]{theorem}
\declaretheorem[
name         = Proposition,
refname      = {Proposition, Propositions},
Refname      = {Proposition, Propositions},
sibling      = theorem,
style        = plain,
]{proposition}
\declaretheorem[
name         = Lemma,
refname      = {Lemma, Lemmas},
Refname      = {Lemma, Lemmas},
sibling      = theorem,
style        = plain,
]{lemma}
\declaretheorem[
name         = Claim,
refname      = {Claim,Claims},
Refname      = {Claim,Claims},
numberwithin      = theorem,
style        = plain,
]{claim}
\declaretheorem[
name 				= Definition,
refname 			= {Definition,Definitions},
Refname			= {Definition,Definitions},
sibling			= theorem,
style				= definition,
]{definition}
\declaretheorem[
name				= Example, 
refname			= {Example,Examples}, 
Refname			= {Example,Examples}, 
sibling			= theorem, 
style				= definition,
]{example}
\crefname{axiom}{Axiom}{Axioms}
\Crefname{axiom}{Axiom}{Axioms}
\newenvironment{claimproof}[1][Proof of Claim]{\begin{proof}[#1]}{\end{proof}}
\newcommand{\N}{\mathbb{N}}
\newcommand{\Z}{\mathbb{Z}}
\newcommand{\calMV}{\mathcal{MV}}
\newcommand{\bM}{\mathbf{M}}
\newcommand{\bA}{\mathbf{A}}
\newcommand{\bF}{\mathbf{F}}
\newcommand{\bC}{\mathbf{C}}
\newcommand{\MVM}{\mathsf{MVM}}
\newcommand{\ULM}{\mathsf{u}\ell\mathsf{M}}
\newcommand{\mvm}{MV-monoidal algebra}
\newcommand{\mvms}{MV-monoidal algebras}
\newcommand{\mv}{MV-algebra}
\newcommand{\mvs}{MV-algebras}
\newcommand{\ulm}{unital commutative distributive $\ell$-monoid}
\newcommand{\ulms}{unital commutative distributive $\ell$-monoids}
\newcommand{\ulg}{unital abelian $\ell$-group}
\newcommand{\ulgs}{unital abelian $\ell$-groups}
\newcommand{\df}{\coloneqq}
\renewcommand{\leq}{\leqslant} 
\renewcommand{\geq}{\geqslant} 
\title{A finite axiomatization of positive MV-algebras}
\keywords{MV-algebras, positive subreducts, lattice-ordered monoids, quasivarieties, quasi-equations, axiomatization.}
\author[M. Abbadini]{Marco Abbadini}
\address{Department of Mathematics, Università degli Studi di Salerno,
	Fisciano (SA), Italy.}
\email{mabbadini@unisa.it}
\author[P. Jipsen]{Peter Jipsen}
\address{
	Keck Center of Science and Engineering,
	Faculty of Mathematics,
	Chapman University,
	Orange, USA.}
\email{jipsen@chapman.edu}
\author[T. Kroupa]{Tom\'a\v{s} Kroupa}
\address{
	Artificial Intelligence Center,
	Faculty of Electrical Engineering,
	Czech Technical University in Prague, Czech Republic.}
\email{tomas.kroupa@fel.cvut.cz}
\author[S. Vannucci]{Sara Vannucci$^{\ast}$}
\address{
	Artificial Intelligence Center,
	Faculty of Electrical Engineering,
	Czech Technical University in Prague, Czech Republic.}
\email{vanucsar@fel.cvut.cz}
\thanks{$^\ast$Corresponding author.}
\subjclass[2020]{Primary: 06D35. Secondary: 06F05, 08C15}
\begin{document}
	
	\begin{abstract}
		Positive MV-algebras are the subreducts of MV-algebras with respect to the signature $\{\oplus, \odot, \lor, \land, 0, 1\}$.
		We provide a finite quasi-equational axiomatization for the class of such algebras.
	\end{abstract}
	
	\maketitle

	\section{Introduction}
	
	An MV-algebra is an algebraic structure with a binary operation $\oplus$, a unary operation $\neg$ and a constant $0$ satisfying certain equational axioms. MV-algebras arose in the literature as the algebraic semantics of {\L}ukasiewicz logic and they are categorically equivalent to lattice-ordered abelian groups with strong unit (unital abelian $\ell$-groups, for short). Throughout the paper we assume familiarity with MV-algebras and unital abelian lattice-ordered groups; see \cite{CignoliOttavianoMundici00, Mundici11} and \cite{BigardKeimelWolfenstein77, Goodearl86} for background information.
	
	Boolean algebras are the algebraic semantics of classical propositional logic and they constitute a subvariety of MV-algebras;
	indeed, Boolean algebras are the MV-algebras satisfying the additional axiom $x \oplus x = x$.
	It is well known that bounded distributive lattices are precisely the algebras $\textbf{B} = \langle B, \vee, \wedge, 0, 1 \rangle$ that are isomorphic to a subreduct of some Boolean algebra.
	In this paper we investigate certain algebras, called \emph{positive MV-algebras}, which are to MV-algebras what bounded distributive lattices are to Boolean algebras.
	Positive MV-algebras are defined as the algebras $\mathbf{A} = \langle A, \oplus, \odot, \vee, \wedge, 0, 1 \rangle$ that are isomorphic to a subreduct of some MV-algebra.
	
	Whereas bounded distributive lattices are a variety axiomatized by finitely many equations, positive MV-algebras are a quasivariety but not a variety.
	Our main result is an axiomatization of positive MV-algebras via finitely many quasi-equations.
	The strategy is the following.
	Mundici's equivalence between MV-algebras and unital abelian $\ell$-groups was generalized by the first author to an equivalence between the categories of MV-monoidal algebras and unital commutative distributive $\ell$-monoids \cite{Abbadini2021-article}. 
	We observe that this equivalence restricts to an equivalence between positive MV-algebras and those unital commutative distributive $\ell$-monoids that are cancellative.
	We derive the axiomatization for positive MV-algebras by adding to the axioms of monoidal MV-algebras a quasi-equation expressing the cancellation property.
	
	Finally, in the appendix, we obtain an analogue of Booleanization for positive MV-algebras. In particular, we prove that the inclusion of a positive MV-algebra into an MV-algebra that is generated by the image of such inclusion is universal.
	
	The paper is organized as follows. In Section~2 we provide basic notions and examples regarding positive MV-algebras. In Section~3 we recall the definition of MV-monoidal algebras, their equivalence with {\ulms}, and we restrict this equivalence to positive MV-algebras. In Section~4 we obtain our main result: a finite quasi-equational axiomatization of positive MV-algebras.
	In the appendix we present the analogue of the free Boolean extension in the MV-algebraic setting.

	\section{Positive MV-algebras}
	
	\begin{definition} \label{d:MV}
		An \emph{MV-algebra} is an algebra $\langle M, \oplus, \neg, 0\rangle$ (arities $2,1,0$) with the following properties.
		\begin{enumerate}
			
			\item
			$\langle M, \oplus, 0 \rangle$ is a commutative monoid.
			
			\item
			$\neg \neg x = x$.
			
			\item
			$x \oplus \neg 0 = \neg 0$.
			
			\item
			$\neg (\neg x \oplus y) \oplus y = \neg (\neg y \oplus x) \oplus x$.
			
		\end{enumerate}
	\end{definition}
	
	A prime example is the so-called \emph{standard MV-algebra} $[0,1]$, with operations $x \oplus y \coloneqq \min\{x + y, 1\}$, $\lnot x \coloneqq 1 - x$, $0 \coloneqq 0$.
	
	It is usual to expand the signature of MV-algebras with the constant $1$ and the binary operations $\odot, \vee, \wedge$ defined as follows:
	$1 \coloneqq \neg 0$, $x \odot y \coloneqq \neg (\neg x \oplus \neg y)$, $x \vee y \coloneqq (x \odot \neg y) \oplus y$, $x \wedge y \coloneqq \neg(\neg x \vee \neg y)$. Let $\calMV$ be the variety of MV-algebras in the signature containing all the above operations. Using this enriched signature we can denote a member of $\calMV$ as 
	\[
	\bM = \langle M, \oplus, \odot, \vee, \wedge, \neg, 0, 1\rangle.
	\]
	Note that all the operations of $\bM$ except $\neg$ are \emph{positive} (i.e.\ order-preserving in each argument). 
	In this paper we focus on the \emph{positive subreducts of MV-algebras}, which are precisely the algebras
	\[
	\bA = \langle A, \oplus, \odot, \vee, \wedge, 0, 1 \rangle, 
	\]
	where $\bA$ is a subreduct of an MV-algebra $\bM$. 
	
	\begin{definition}
		An algebra of type $\{\oplus, \odot, \vee, \wedge, 0, 1\}$ is a \emph{positive MV-algebra} if it is isomorphic to a positive subreduct of some MV-algebra. We let $\mathcal{MV}_+$ denote the class of all positive MV-algebras.
	\end{definition}
	
	A prime example of a positive MV-algebra is the positive reduct of the standard MV-algebra $[0,1]$. We call this reduct the \emph{standard positive MV-algebra}.
	
	\begin{definition}
		A \em{McNaughton function} is a function $g\colon[0,1]^n \to [0,1]$
		which is continuous and piecewise linear with
		integer coefficients, i.e., there exist linear polynomials $p_1, \dots, p_t\colon[0, 1]^n \to [0,1]$ with integer coefficients, such that for any $(x_1, \dots, x_n) \in [0,1]^n$ there is $j$, $1 \leq j \leq t$ with $g(x_1, \dots, x_n) = p_j(x_1, \dots, x_n)$. 
	\end{definition}
	
	For each natural number $n$, the free $n$-generated MV-algebra is isomorphic to the algebra $\bF_n$ of McNaughton functions $[0,1]^n\to [0,1]$; see \cite{McNaughton51} or \cite[Chapter 3]{CignoliOttavianoMundici00}.

	\begin{example}[Non-decreasing McNaughton functions]\label{free}
		Let us equip $[0,1]$ with the standard order of reals, and let $\leq$ be the product order of $[0,1]^n$.  Then the algebra $\bF^{\leq}_n$ of non-decreasing (i.e.\ order-preserving) McNaughton functions is a positive subreduct of $\bF_n$.
	\end{example}
	
	We remark that McNaughton functions in $\bF^{\leq}_n$ correspond to positive formulae in {\L}ukasiewicz logic. This observation follows from an easy extension of the original McNaughton theorem; see \cite[Theorem 3.5]{CintulaKroupa13}.
	
	
	\begin{proposition} \label{p:generation}
		The class of positive MV-algebras is the quasivariety generated by the standard positive MV-algebra $[0,1]$.
	\end{proposition}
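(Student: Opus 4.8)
The plan is to derive the statement from \cref{l:quasi-variety-subreducts} by taking $\Sigma = \{\oplus,\odot,\vee,\wedge,\neg,0,1\}$ to be the full MV-signature, $\Sigma' = \{\oplus,\odot,\vee,\wedge,0,1\}$ the positive sub-signature, $\mathcal{Q} = \calMV$, and the generating class $\mathcal{K} = \{[0,1]\}$ consisting of the standard MV-algebra. First I would observe that, straight from the definitions, $\calP$ is exactly $\mathbf{I}\mathbf{S}(\calMV\upharpoonright_{\Sigma'})$: a positive MV-algebra is by definition an isomorphic copy of a subalgebra of the $\Sigma'$-reduct of some MV-algebra, and these are precisely the members of $\mathbf{I}\mathbf{S}(\calMV\upharpoonright_{\Sigma'})$. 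The lemma then identifies this class with the quasi-variety generated by $[0,1]\upharpoonright_{\Sigma'}$, which is the standard positive MV-algebra, yielding exactly the desired conclusion.

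To apply the lemma I must know that $\calMV$ is the \emph{quasi-variety} generated by the standard MV-algebra, i.e.\ $\calMV = \mathbf{I}\mathbf{S}\mathbf{P_r}(\{[0,1]\})$. For this I would invoke Di Nola's representation theorem: every MV-algebra embeds into a power $([0,1]^{*})^{I}$ of a suitable ultrapower $[0,1]^{*}$ of the standard MV-algebra. Since the quasi-variety generated by a class is its closure under $\mathbf{I}$, $\mathbf{S}$ and reduced products, and reduced products subsume both direct powers and ultrapowers (so that $\mathbf{I}\mathbf{S}\mathbf{P_r} = \mathbf{I}\mathbf{S}\mathbf{P}\mathbf{P_u}$), this embedding places every MV-algebra in $\mathbf{I}\mathbf{S}\mathbf{P_r}(\{[0,1]\})$. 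The reverse inclusion is immediate because $\calMV$ is a variety containing $[0,1]$, and so $\calMV = \mathbf{I}\mathbf{S}\mathbf{P_r}(\{[0,1]\})$, as required.

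The one genuinely non-trivial ingredient is this last point. Chang's completeness theorem only yields $\calMV = \mathbf{H}\mathbf{S}\mathbf{P}(\{[0,1]\})$, and the operator $\mathbf{H}$ of homomorphic images is not available when generating a quasi-variety, so the weaker varietal statement does not by itself suffice. What makes the quasi-variety already equal to all of $\calMV$ is that ultrapowers of $[0,1]$ contain infinitesimals, and hence suffice to represent non-semisimple algebras such as Chang's algebra; this is precisely the content supplied by Di Nola's theorem. Once it is in place, the remainder is bookkeeping with the closure operators via \cref{l:quasi-variety-subreducts}.
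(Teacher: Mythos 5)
Your proposal is correct and follows essentially the same route as the paper: invoke Di Nola's representation theorem to get that the standard MV-algebra generates $\calMV$ as a quasi-variety, then apply \cref{l:quasi-variety-subreducts} with $\Sigma' = \{\oplus,\odot,\vee,\wedge,0,1\}$. The paper's proof is just a terser version of yours, leaving the $\mathbf{I}\mathbf{S}\mathbf{P_r} = \mathbf{I}\mathbf{S}\mathbf{P}\mathbf{P_u}$ bookkeeping implicit.
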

	\begin{proof}
		It is well known that if $\mathcal{Q}$ is a quasivariety of type $\Sigma$ generated by a class $\mathcal{K}$, and $\Sigma'$ is a subsignature of $\Sigma$, the class of isomorphic copies of subalgebras of $\Sigma'$-reducts of algebras in $\mathcal{Q}$ is the quasivariety generated by the $\Sigma'$-reducts of algebras in $\mathcal{K}$. 
		By Di Nola's representation theorem (see \cite[Theorem~9.5.1]{CignoliOttavianoMundici00}), the standard MV-algebra $[0,1]$ generates the class of MV-algebras as a quasivariety.
		The desired result follows.
	\end{proof}
	
	Next, we show that the quasivariety $\mathcal{MV}_+$ is not a variety.
	To this purpose, we recall the definition of Chang's MV-algebra.
	
	\begin{definition}
		The \emph{Chang's MV-algebra} is a linearly ordered MV-algebra---denoted in this paper by $\bC$---defined on the set of formal symbols
		\[C \coloneqq \{0, \varepsilon, 2\varepsilon, 3\varepsilon, \dots\} \cup \{\dots, 1-3\varepsilon, 1 - 2\varepsilon, 1-\varepsilon, 1\}
		\]
		(where we think of $\varepsilon$ as an infinitesimal) and with the following operations:
		\[
		x \oplus y \coloneqq \begin{cases}
			(m+n)\varepsilon, & \text{if $x=n\varepsilon$, $y=m\varepsilon$, } \\
			1-(m-n)\varepsilon, & \text{if $x=1-m\varepsilon$, $y=n\varepsilon$ and $0<n<m$,} \\
			1-(n-m)\varepsilon, & \text{if $x=m\varepsilon$, $y=1-n\varepsilon$ and $0<m<n$,} \\
			1 & \text{otherwise,}
		\end{cases}
		\]
		and
		\[
		\neg x\coloneqq \begin{cases}
			1-n\varepsilon, & \text{if $x=n\varepsilon$, } \\
			n\varepsilon, & \text{if $x=1-n\varepsilon$.}
		\end{cases}
		\]
		
	\end{definition}
	\begin{example}
		Let $\bC$ be Chang's MV-algebra. The algebra $\bC'$ with the universe $\{0,\varepsilon,2 \varepsilon,\dots,1\}$ is a positive subreduct of $\bC$.
		Let $\theta$ be an equivalence relation on the algebra $\bC'$ with classes $\{0\}$, $\{\varepsilon,2\varepsilon,\dots\}$, and $\{1\}$. Then $\theta$ is a $\mathcal{MV}_+$-congruence on $\bC'$. The quotient $\bC'/\theta$ is isomorphic to the three-element algebra $\{\bar{0},\bar{\varepsilon},\bar{1}\}$ that satisfies $\bar{\varepsilon} \oplus \bar{\varepsilon} = \bar{\varepsilon} $ and $\bar{\varepsilon}\odot \bar{\varepsilon} = \bar{0}$.
		The quasi-equation
		\[
		x \oplus x = x \Rightarrow x \odot x = x
		\]
		holds for all MV-algebras, and thus for all positive MV-algebras, as well. However, this quasi-equation does not hold in $\bC'/\theta$, as witnessed by the element $\bar{\varepsilon}$.
		Therefore, $\bC'/\theta$ is not a positive MV-algebra.
	\end{example}
	
	\begin{theorem}[{\cite[Theorem~3.5]{CintulaKroupa13}}] \label{l:CintulaKroupa}
		For every $n \in \N$, the subalgebra of the power $[0,1]^{[0,1]^n}$ of the standard positive MV-algebra $[0,1]$ generated by the projections consists precisely of the non-decreasing (i.e.\ order-preserving) McNaughton functions from $[0,1]^n$ to $[0,1]$.
	\end{theorem}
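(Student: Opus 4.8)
The plan is to prove the two inclusions separately. Write $S$ for the subalgebra of $[0,1]^{[0,1]^n}$ generated by the projections $\pi_1,\dots,\pi_n$, and let $L$ denote the set of non-decreasing McNaughton functions $[0,1]^n\to[0,1]$.

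For the inclusion $S\seq L$, which is the easy one, I would argue by induction on the structure of the generating terms. Each projection $\pi_i$ is a non-decreasing McNaughton function, and so are the constants $0$ and $1$. Since $\oplus,\odot,\vee,\wedge$ are all term-definable in the full MV-signature, any positive term in the projections is in particular an MV-term in the projections, hence a McNaughton function by McNaughton's theorem; and since each of $\oplus,\odot,\vee,\wedge$ is order-preserving in every argument, it sends non-decreasing functions to non-decreasing functions. Thus every element of $S$ lies in $L$.

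For the substantial inclusion $L\seq S$ I would proceed in two steps. First, a normal-form step: every continuous piecewise-linear function equals the finite join of finite meets of its affine pieces (the max--min representation of piecewise-linear functions). Given $f\in L$, each affine piece $\ell(x)=\sum_k a_k x_k+b$ agrees with $f$ on a full-dimensional cell, so $f$ being non-decreasing forces $a_k\geq 0$, while $f$ being a McNaughton function forces $a_k,b\in\Z$. Because $f$ takes values in $[0,1]$ and clamping to $[0,1]$ commutes with $\vee$ and $\wedge$, I can truncate each piece and write $f=\bigvee_i\bigwedge_j t_{ij}$, where each $t=\bigl((\sum_k a_k x_k+b)\vee 0\bigr)\wedge 1$ is a truncated affine function with non-negative integer slopes. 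As $S$ is closed under $\vee$ and $\wedge$, it then remains only to show that each such $t$ lies in $S$.

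The second, realization step is where I expect the main obstacle: building $t=\bigl((\sum_k a_k x_k+b)\vee 0\bigr)\wedge 1$ from the projections using the positive operations alone, that is, without negation. Writing $\sum_k a_k x_k$ as $y_1+\dots+y_M$ for projections $y_i$ repeated according to the multiplicities $a_k$, the function $t$ is, up to the constants $0$ and $1$, one of the ``levels'' $\bigl((\sum_{i=1}^M y_i - j)\vee 0\bigr)\wedge 1$. The key point is that these levels are definable from $y_1,\dots,y_M$ by $\oplus$ and $\odot$ alone, through a construction mirroring addition with carry: if $L_0\geq L_1\geq\cdots$ are the levels of $\sum_{i<r}y_i$, then adjoining $y_r$ produces the new levels $L'_k=L_k\oplus c_{k-1}$ and carries $c_k=L_k\odot c_{k-1}$, with the incoming carry $c_{-1}=y_r$. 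Verifying that this recursion really computes $\bigl((\sum_{i\leq r}y_i-j)\vee 0\bigr)\wedge 1$ --- that $\oplus$ stores the truncated partial sum at each level while $\odot$ extracts exactly the overflow passed to the next --- is the crux; granting it, induction on $M$ places every $t$ in $S$, and the normal form of the previous step then yields $f\in S$, completing the proof.
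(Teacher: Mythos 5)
The paper offers no proof of this statement: it is imported wholesale as \cite[Theorem~3.5]{CintulaKroupa13}, so the only ``internal'' argument is the citation. What you have written is essentially a reconstruction of the proof in that reference, which adapts the constructive proof of McNaughton's theorem to the negation-free fragment, and your outline is correct: the easy inclusion by induction on terms, the hard one via the max--min representation into truncated affine pieces with non-negative integer slopes, and then a term realization of each truncated piece. The crux you flag but do not verify --- that the carry recursion $L'_k = L_k \oplus c_{k-1}$, $c_k = L_k \odot c_{k-1}$ with $c_{-1} = y_r$ really produces the levels of $\sum_{i \leq r} y_i$ --- does go through, and can be closed as follows. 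The identity $(a \oplus b) + (a \odot b) = a + b$ gives $L'_k + c_k = L_k + c_{k-1}$, so telescoping yields $\sum_{j \leq k} L'_j + c_k = \sum_{j \leq k} L_j + y_r$; in particular the new sequence sums to $s + y_r$. Moreover, a sequence $1 \geq u_0 \geq u_1 \geq \dots \geq 0$ in $[0,1]$ is the level sequence of its own sum if and only if $u_{k+1} > 0$ implies $u_k = 1$, and this staircase property is preserved by the recursion: if $c_k > 0$ then $L_k + c_{k-1} > 1$, whence $L'_k = 1$, while if $L_{k+1} > 0$ then $L_k = 1$ and again $L'_k = 1$. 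Two smaller points you should make explicit. First, before asserting that every affine piece of $f$ has non-negative slopes, discard the pieces that agree with $f$ only on sets with empty interior; a standard closure argument shows the remaining pieces still cover $[0,1]^n$, and the max--min representation uses only those, which is what licenses the conclusion $a_k \geq 0$. Second, the degenerate truncations arising when $b \geq 1$ or $b \leq -M$ are the constants $1$ and $0$, which lie in $S$ because they are nullary operations of the signature.
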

	
	\begin{proposition}
		The free $n$-generated positive MV-algebra is isomorphic to the positive subreduct $\bF^{\leq}_n$ of $\bF_n$ from \cref{free}.
	\end{proposition}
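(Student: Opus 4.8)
The plan is to combine the two preceding results: \cref{p:generation}, which identifies $\calP$ as the quasi-variety generated by the standard positive MV-algebra $[0,1]$, and \cref{l:CintulaKroupa}, which describes the subalgebra of the power $[0,1]^{[0,1]^n}$ generated by the projections as the algebra of non-decreasing McNaughton functions, that is, as $\bF^{\leq}_n$ from \cref{free}. The link between them is the standard universal-algebraic fact that, for a quasi-variety generated by a single algebra $K$, the free $n$-generated member is isomorphic to the subalgebra of $K^{K^n}$ generated by the $n$ coordinate projections $\pi_1, \dots, \pi_n$, where $\pi_i$ maps a point $p \in K^n$ to its $i$-th coordinate. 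Instantiating $K$ as $[0,1]$ and invoking \cref{l:CintulaKroupa} then identifies the free $n$-generated positive MV-algebra with $\bF^{\leq}_n$, the isomorphism sending the $i$-th free generator to the projection $\pi_i$.

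First I would justify this bridging fact. Writing $T$ for the term algebra over the generators $x_1, \dots, x_n$ in the signature $\{\oplus, \odot, \vee, \wedge, 0, 1\}$, consider the homomorphism $\Phi \colon T \to [0,1]^{[0,1]^n}$ that sends a term $t$ to the evaluation function $p \mapsto t^{[0,1]}(p)$. By construction $\Phi(x_i) = \pi_i$, so the image of $\Phi$ is exactly the subalgebra $S$ generated by the projections, while the kernel of $\Phi$ consists of those pairs of terms that agree under every assignment into $[0,1]$, i.e.\ of the identities valid in $[0,1]$.

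Next I would identify $S$ with the free algebra. The free $n$-generated algebra in the quasi-variety $\calP$ is $T/\theta$, where $\theta$ is the intersection of the kernels of all homomorphisms from $T$ into members of $\calP$, equivalently the set of pairs $(s,t)$ with $\calP \models s = t$. Since a quasi-variety and the variety it generates satisfy the same identities, and since $[0,1]$ generates $\calP$, this congruence $\theta$ coincides with $\ker \Phi$; hence $T/\theta \cong S$. Combined with \cref{l:CintulaKroupa}, which gives $S = \bF^{\leq}_n$, this proves the proposition.

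The only delicate point, and the one I would take care over, is the last identification of congruences: a priori the free object of $\calP$ is cut out by homomorphisms into all of $\calP$, whereas $\Phi$ only tests homomorphisms into the single algebra $[0,1]$. These yield the same congruence because free algebras depend only on the equational theory of the class, and $\calP = \mathbf{Q}([0,1])$ has the same identities as the variety it generates, so that testing against $[0,1]$ alone suffices. Everything else is routine; the substantive input---describing the projection-generated subalgebra as the non-decreasing McNaughton functions---is supplied by \cref{l:CintulaKroupa}.
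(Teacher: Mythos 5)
Your proposal is correct and follows the same route as the paper: combine \cref{p:generation} with \cref{l:CintulaKroupa} via the standard fact that the free $n$-generated algebra of the quasi-variety generated by a single algebra $K$ is the subalgebra of $K^{K^n}$ generated by the projections. The paper simply invokes this fact without proof, whereas you spell out its justification (correctly); otherwise the arguments coincide.
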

	\begin{proof}
		By \cref{p:generation}, $\mathcal{MV}_+$ is the quasivariety generated by the standard positive MV-algebra $[0,1]$; 
		thus, the free $n$-generated positive algebra is (up to isomorphism) the subalgebra of the power $[0,1]^{[0,1]^n}$ of the standard positive MV-algebra $[0,1]$ generated by the projections, which, by \cref{{l:CintulaKroupa}}, is $\bF^{\leq}_n$.
	\end{proof}

	\section{MV-monoidal algebras}
	
	Every positive MV-algebra satisfies the equations (\ref{ax:A1}--\ref{ax:A7}) in Definition \ref{d:MVM} below (see \cite{CignoliOttavianoMundici00}). Since the algebras thus defined are of separate interest \cite{Abbadini2021-article}, we recall their definition.
	
	\begin{definition} \label{d:MVM}
		An \emph{{\mvm}} is an algebra $\langle A, \oplus, \odot, \lor, \land, 0, 1 \rangle$ (arities $2$, $2$, $2$, $2$, $0$, $0$) satisfying the following equational axioms. 
		\begin{enumerate}[label = (E\arabic*), ref = E\arabic*]
			
			\item	\label{ax:A1}
			$\langle A, \lor, \land \rangle$ is a distributive lattice.		
			
			\item	\label{ax:A2}
			$\langle A, \oplus, 0 \rangle$ and $\langle A, \odot, 1 \rangle$ are commutative monoids.
			
			\item	\label{ax:A3}
			The operations $\oplus$ and $\odot$ distribute over both $\lor$ and $\land$.		
			
			\item	\label{ax:A4}
			$(x \oplus y) \odot ((x \odot y) \oplus z) = (x \odot (y \oplus z)) \oplus (y \odot z)$.
			
			\item \label{ax:A5}
			$(x \odot y) \oplus ((x \oplus y) \odot z) = (x \oplus (y \odot z)) \odot (y \oplus z)$.
			
			\item	\label{ax:A6}
			$(x \odot y) \oplus z = ((x \oplus y) \odot ((x \odot y) \oplus z)) \lor z$.
			
			\item	\label{ax:A7}
			$(x \oplus y) \odot z = ((x \odot y) \oplus ((x \oplus y) \odot z)) \land z$.
			
		\end{enumerate}
	\end{definition}
	
	The main use of \eqref{ax:A2}, \eqref{ax:A4}, \eqref{ax:A5} is to prove the associativity of the monoid operation $+$ in the enveloping {\ulm} of $\mathbf{A}$.
	The main use of \eqref{ax:A3}, \eqref{ax:A6} (resp.\ \eqref{ax:A3},\eqref{ax:A7}) is to prove that $+$ distributes over $\lor$ (resp.\ $\land$).
	
	We denote with $\MVM$ the category of {\mvms} with homomorphisms. 
	
	\begin{definition} \label{d:l-group}
		An \emph{abelian lattice-ordered group} (\emph{abelian $\ell$-group}, for short) is an algebra $\langle G, +, \lor, \land, 0, - \rangle$ (arities 2,2,2,0,1) such that $\langle G, +, 0, - \rangle$ is an abelian $\ell$-group, $\langle G, \lor, \land \rangle$ is a lattice, and $+$ distributes over $\lor$ and $\land$.
		An element $1$ of an abelian $\ell$-group $\mathbf{G}$ is a \emph{strong unit} (or simply a unit) if $1 \geq 0$ and, for every $g \in G$, there is $n \in \N$ such that $g \leq n1$.
		In this paper, by a \emph{unital abelian lattice-ordered group} we mean an algebra $\langle G, +, \lor, \land, 0, 1, -1, - \rangle$ such that $\langle G, +, \lor, \land, 0, - \rangle$ is an abelian $\ell$-group, $1$ is a unit, and $-1$ is the group-inverse of $1$.
	\end{definition}
	
	We will further consider the category of MV-algebras (the category whose objects are MV-algebras and whose morphisms are MV-homomorphisms) and the category of unital abelian $\ell$-groups (the category whose objects are unital abelian $\ell$-groups and morphisms are group- and lattice-homomorphisms preserving units). These two categories are equivalent by a theorem of Mundici \cite{Mundici86}. These results can be lifted to an equivalence between unital commutative distributive lattice-ordered monoids and MV-monoidal algebras; see \cite{Abbadini2021-article} or \cite[Chapter~4]{Abbadini2021-thesis} for details. Since our proof of quasi-equational characterization of positive MV-algebras is essentially based on this equivalence, we provide a summary and some consequences of this result.

	\begin{definition} \label{d:ulm}
		A \emph{commutative distributive lattice-ordered monoid with strong units} (henceforth shortened to \emph{\ulm}) is an algebra $\langle M, +, \lor, \land, 0, 1, -1 \rangle$ (arities $2,2,2,0,0,0$) with the following properties.
		\begin{enumerate}[label = (M\arabic*),  ref = M\arabic*,  start = 1]
			
			\item \label[axiom]{ax:M1-gen}
			$\langle M, \lor, \land \rangle$ is a distributive lattice.
			
			\item	\label[axiom]{ax:M2-gen} 
			$\langle M, +, 0 \rangle$ is a commutative monoid.
			
			\item \label[axiom]{ax:M3-gen} 
			The operation $+$ distributes over $\lor$ and $\land$ on both sides.
			
			\item	\label[axiom]{ax:U1}
			$-1 + 1 = 0$.
			
			\item \label[axiom]{ax:U2}
			$-1 \leq 0 \leq 1$.
			
			\item \label[axiom]{ax:U3}
			For all $x \in M$, there exists $n \in \N$ such that 
			\[
			\underbrace{(-1) + \dots + (-1)}_{n\ \text{times}}\leq x\leq \underbrace{1 + \dots + 1}_{n\ \text{times}}.
			\]
			
		\end{enumerate}
	\end{definition}
	For $n \in \N$, we write $n$ for $1 + \dots + 1$ ($n$ times) and $-n$ for $(-1) + \dots + (-1)$ ($n$ times), and, we write $x - n$ for $x + (-n)$. 
	For {\ulgs}, the constant $-1$ is term-definable via the constant $1$ and the group inverse operation.
	Since in the context of {\ulms} we do not have the group-inverse operation, we stress the fact that $-1$ is an indivisible formal symbol, i.e.\ a constant symbol in the signature.
	
	We let $\ULM$ denote the category of {\ulms} with homomorphisms. For a {\ulm} $\mathbf{M}$, we set
	\[
	\Gamma(\mathbf{M})\df \{x\in M\mid 0\leq x\leq 1 \}.
	\]
	We equip $\Gamma(\mathbf{M})$ with the operations of {\mvms}: we define $\lor$, $\land$, $0$ and $1$ by the restriction, and we set
	\[
	x \oplus y \df (x + y) \land 1,  \qquad  x \odot y \df (x + y - 1) \lor 0.
	\]
	The algebra $\langle \Gamma(\mathbf{M}), \oplus, \odot, \lor, \land, 0, 1 \rangle$ is an {\mvm} (see \cite[Proposition~3.6]{Abbadini2021-article} or \cite[Theorem~4.29]{Abbadini2021-thesis}).
	Given a morphism of {\ulms} $f\colon \mathbf{M}\to \mathbf{N}$, we let $\Gamma(f)$ denote its restriction $\Gamma(f)\colon \Gamma(\mathbf{M})\to \Gamma(\mathbf{N})$. This yields a functor
	\[
	\Gamma\colon \ULM\to \MVM.
	\]
	
	\begin{theorem}[{\cite[Theorem~8.21]{Abbadini2021-article} or \cite[Theorem~4.74]{Abbadini2021-thesis}}]\label{t:Gam-is-equivalence}
		The functor 
		\[
		\Gamma \colon \ULM \to \MVM
		\]
		determines an equivalence of categories.
	\end{theorem}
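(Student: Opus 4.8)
The plan is to prove the statement by exhibiting an explicit quasi-inverse functor $\Xi \colon \MVM \to \ULM$ together with natural isomorphisms $\Gamma \circ \Xi \cong \mathrm{id}_{\MVM}$ and $\Xi \circ \Gamma \cong \mathrm{id}_{\ULM}$. This mirrors Mundici's proof of the equivalence between $\calMV$ and unital commutative $\ell$-groups, adapted to the monoidal setting. Once the two natural isomorphisms are in place, fullness and faithfulness of $\Gamma$ follow formally, so the real content is the construction of $\Xi$ and the verification that it lands in $\ULM$.

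To build $\Xi(\bA)$ for an {\mvm} $\bA$, I would take the \emph{enveloping $\ell$-monoid} to consist of doubly-infinite ``good sequences'' over $\bA$: sequences $x = (x_n)_{n \in \Z}$ with $x_n \in A$ that are eventually $1$ as $n \to -\infty$, eventually $0$ as $n \to +\infty$, and satisfy $x_n \oplus x_{n+1} = x_n$ for all $n$. Such a sequence is to be read as the staircase decomposition of an element into its parts in the successive unit intervals. The lattice operations are defined coordinatewise, the units $1$ and $-1$ are the two shift operators, and the monoid addition $+$ is given by a convolution formula in $\oplus$ and $\odot$ generalizing the good-sequence addition from the MV-algebra case. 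The elements satisfying $0 \le x \le 1$ are exactly the sequences supported in a single level, which recovers $\bA$ and yields $\Gamma \Xi(\bA) \cong \bA$.

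The substance of the proof is checking that $\Xi(\bA)$ satisfies \cref{ax:M1-gen,ax:M2-gen,ax:M3-gen,ax:U1,ax:U2,ax:U3}. The lattice axiom \cref{ax:M1-gen} follows coordinatewise from \ref{ax:A1}, and the unit axioms \cref{ax:U1,ax:U2,ax:U3} are immediate from the shift description together with the eventual behaviour of good sequences. As recorded in the remarks after \cref{d:MVM}, commutativity and associativity of $+$ (\cref{ax:M2-gen}) are forced by \ref{ax:A2}, \ref{ax:A4} and \ref{ax:A5}, while distributivity of $+$ over $\lor$ and $\land$ (\cref{ax:M3-gen}) comes from \ref{ax:A3} with \ref{ax:A6} and \ref{ax:A7}. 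Functoriality of $\Xi$ is obtained by acting coordinatewise on morphisms, and the remaining natural isomorphism $\Xi \Gamma(\bM) \cong \bM$ is induced by the level map $x \mapsto (x_n)_n$ with $x_n \df \big((x - n \cdot 1) \lor 0\big) \land 1$, where $n \cdot 1$ abbreviates the $n$-fold sum of $1$ (and of $-1$ for $n < 0$, using \cref{ax:U1}); one checks that this map is a bijective homomorphism, so that $x$ is determined by and reconstructible from its truncations.

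I expect the main obstacle to be the associativity of the convolution addition on good sequences, which is precisely where the non-obvious identities \ref{ax:A4} and \ref{ax:A5} enter and where the group-case argument breaks down: since $\bA$ carries no negation or cancellation, one cannot pass to a group of differences, and the identity must be established directly from the axioms. A related technical point is well-definedness, namely that the convolution of two good sequences is again good with the correct tails; both are handled by the same axioms and, as the cited sources \cite{Abbadini2021-article, Abbadini2021-thesis} confirm, form the technical heart of the equivalence.
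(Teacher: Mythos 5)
This theorem is not proved in the paper at all: it is imported verbatim from \cite[Theorem~8.21]{Abbadini2021-article}, so there is no in-paper argument to compare against. Your outline does follow the same Mundici-style strategy as the cited source (an enveloping algebra of $\Z$-indexed good sequences, shifts as units, convolution addition, truncation maps), and you correctly identify where \ref{ax:A4}, \ref{ax:A5} and \ref{ax:A6}, \ref{ax:A7} must enter. But as a proof it is a sketch that defers exactly the parts that constitute the content of the theorem: well-definedness and associativity of the convolution addition are flagged as ``the main obstacle'' rather than established, and verifying that the level map $x \mapsto \bigl(((x - n\cdot 1)\lor 0)\land 1\bigr)_{n\in\Z}$ is a bijective homomorphism is asserted, not checked.

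There is also one concrete error. In the MV-monoidal setting the goodness condition cannot be just $x_n \oplus x_{n+1} = x_n$; one must also require $x_n \odot x_{n+1} = x_{n+1}$. In an MV-algebra the second condition is recoverable using negation, but in an {\mvm} the two are independent: the paper's own example of the three-element quotient of the Chang subreduct has an element $\bar{\epsilon}$ with $\bar{\epsilon}\oplus\bar{\epsilon} = \bar{\epsilon}$ yet $\bar{\epsilon}\odot\bar{\epsilon} = \bar{0} \neq \bar{\epsilon}$. Consequently the sequence $(\dots,1,1,\bar{\epsilon},\bar{\epsilon},0,0,\dots)$ is ``good'' under your definition but is not the truncation sequence of any element of the corresponding {\ulm} (truncation sequences always satisfy both conditions, as a direct computation with $\oplus$ and $\odot$ in $\Gamma(\mathbf{M})$ shows). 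So with your definition $\Xi\Gamma(\mathbf{M})$ is strictly larger than $\mathbf{M}$ for such $\mathbf{M}$, and the natural isomorphism $\Xi\Gamma \cong \mathrm{id}_{\ULM}$ fails. Adding the $\odot$ condition repairs the definition and brings your construction in line with the one in \cite{Abbadini2021-article}, but the associativity and naturality verifications would still need to be written out for this to count as a proof.
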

	\noindent
	A quasi-inverse for $\Gamma$ is described in \cite[Chapter~4, Section~6]{Abbadini2021-thesis} and is denoted by $\Xi$; we mention that this construction---based on the notion of good $\Z$-sequence---is suggested by the following result.
	
	\begin{proposition}[{\cite[Theorem~4.72]{Abbadini2021-thesis}}] \label{p:good}
		Let $\mathbf{M}$ be a {\ulm}, and let $x, y \in M$.
		If, for every $n \in \Z$, we have
		\[
		((x-n) \lor 0) \land 1 = ((y-n) \lor 0) \land 1,
		\]
		then $x = y$.
	\end{proposition}
	
	\Cref{p:good} is a consequence of the following fact.
	
	\begin{lemma}[{\cite[Proposition~4.68]{Abbadini2021-thesis}}]\label{l:sum-slices}
		For every {\ulm} $\mathbf{M}$, every $x \in M$, and all $n \leq m \in \Z$ such that $-n \leq x \leq m$, we have
		\[
		x = n + \sum_{i = n}^{m} ((x - n) \lor 0) \land 1.
		\]
	\end{lemma}

	\begin{proposition} \label{p:preservation}
		The functors $\Gamma$ and $\Xi$ preserve and reflect injectivity of morphisms.
	\end{proposition}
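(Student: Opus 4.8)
The plan is to establish the claim for $\Gamma$ by hand, exploiting that $\Gamma(f)$ is literally a restriction of $f$, and then to transfer it to $\Xi$ formally using the equivalence of \cref{t:Gam-is-equivalence}. Preservation of injectivity by $\Gamma$ will be immediate: for a morphism $f \colon \mathbf{M} \to \mathbf{N}$ of {\ulms}, the map $\Gamma(f)$ is by definition the restriction of $f$ to $\Gamma(M) \seq M$, and the restriction of an injective map is injective.

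The substantive step will be to show that $\Gamma$ reflects injectivity. The idea is that $f$ commutes with truncation and that every element of a {\ulm} is reconstructible from its truncations into $\Gamma$. For $k \in \Z$, let $\bar k$ denote the $k$-fold sum of $1$ (or of $-1$ when $k < 0$), and define $t_k \colon M \to \Gamma(M)$ by $t_k(x) \df ((x + \overline{-k}) \lor 0) \land 1$, which lands in $\Gamma(M)$ by \cref{ax:U1,ax:U2}. Since $f$ preserves $+$, $\lor$, $\land$, $0$, $1$, and $-1$, one verifies that $f(t_k(x)) = t_k(f(x))$, and because $t_k(x) \in \Gamma(M)$ this value equals $\Gamma(f)(t_k(x))$. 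Hence, assuming $\Gamma(f)$ injective, $f(x) = f(y)$ forces $t_k(x) = t_k(y)$ for all $k$. I would then close the argument with the reconstruction identity
\[
x = \overline{-n} + \sum_{k=-n}^{n-1} t_k(x),
\]
valid whenever $\overline{-n} \le x \le \bar n$, such an $n$ being furnished by \cref{ax:U3}: choosing $n$ to bound both $x$ and $y$ and applying the identity twice yields $x = y$.

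To pass to $\Xi$, I would use only the natural isomorphisms $\Gamma\Xi \cong \mathrm{id}_{\MVM}$ and $\Xi\Gamma \cong \mathrm{id}_{\ULM}$ coming from the equivalence. Each of their components is an isomorphism in $\MVM$ or $\ULM$, hence a bijection on underlying sets, so naturality gives that a morphism $g$ of {\mvms} is injective if and only if $\Gamma\Xi(g)$ is. Combined with the two properties just proved for $\Gamma$, this shows $g$ is injective if and only if $\Xi(g)$ is, which is exactly preservation and reflection of injectivity for $\Xi$.

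The part I expect to be delicate is the reconstruction identity: since a {\ulm} need not be cancellative, one cannot telescope the truncations as one would in an $\ell$-group, so the identity has to be extracted from the monoid and distributivity axioms, or read off from the explicit construction of $\Xi$ in the cited work. The remaining steps are a one-line restriction argument and a routine manipulation of the unit and counit of the equivalence.
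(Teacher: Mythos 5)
Your proposal is correct and follows essentially the same route as the paper: preservation of injectivity by $\Gamma$ is the one-line restriction argument, reflection is proved via the family of truncations $((x-n)\lor 0)\land 1$ for $n\in\Z$, and the statement for $\Xi$ is transferred through the equivalence. The only divergence is at the final step: the paper simply asserts that equality of all truncations forces $x=y$ (leaving the justification to the cited construction of $\Xi$), whereas you propose an explicit reconstruction identity to prove it --- this is indeed the delicate point you flag, but it is derivable from the identity $(a\lor b)+(a\land b)=a+b$, which holds in {\ulms} and which the paper itself uses freely in Section~4, so it is not a gap relative to the paper's own level of detail.
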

	
	\begin{proof}		
		It is enough to show that $\Gamma$ preserves and reflects injectivity.		
		Since $\Gamma$ is defined as the restriction on morphisms, $\Gamma$ preserves injectivity.		
		Let us prove that $\Gamma$ reflects injectivity.
		Let $f \colon \mathbf{M} \to \mathbf{N}$ be a morphism of {\ulms}, and suppose $\Gamma(f)$ to be injective.
		Let $x, y \in \mathbf{M}$ and suppose $f(x) = f(y)$.
		For every $n \in \Z$, we have
		\begin{align*}
			\Gamma(f)(((x - n) \lor 0) \land 1) & = f(((x - n) \lor 0) \land 1)\\
			& = ((f(x) - n) \lor 0) \land 1\\
			& = ((f(y) - n) \lor 0) \land 1\\
			& = f(((y - n) \lor 0) \land 1)\\                                    
			& = \Gamma(f)(((y - n) \lor 0) \land 1).
		\end{align*}
		Since $\Gamma(f)$ is injective, we deduce
		\[
		((x - n) \lor 0) \land 1 = ((y - n) \lor 0) \land 1.
		\]
		Thus, $f$ is injective.
		Since this holds for every $n \in \Z$, by \cref{p:good} we have $x = y$.
		This proves that $\Gamma$ reflects injectivity.
	\end{proof}
	
	\begin{lemma} \label{l:correspondence}
		The $\{+, \lor, \land, 0, 1, -1\}$-reduct of a {\ulg} is a {\ulm}, and the $\{\oplus, \odot, \lor, \land, 0, 1\}$-reduct of an {\mv} is an {\mvm}.
		Moreover, 
		\begin{enumerate}
			\item \label{i:corr-reducts}
			the equivalence $\Gamma \colon \ULM \longleftrightarrow \MVM \colon \Xi$ restricts to an equivalence between isomorphic copies of reducts of {\ulgs} and isomorphic copies of reducts of {\mvs}, and
			
			\item \label{i:corr-subreducts}
			the equivalence $\Gamma \colon \ULM \longleftrightarrow \MVM \colon \Xi$ restricts to an equivalence between isomorphic copies of subreducts of {\ulgs} and isomorphic copies of subreducts of {\mvs}.
		\end{enumerate}
	\end{lemma}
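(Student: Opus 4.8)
The plan is to deduce everything from the equivalence $\Gamma\colon \ULM \to \MVM$ (\cref{t:Gam-is-equivalence}) together with Mundici's equivalence between {\ulgs} and {\mvs} \cite{Mundici86}, by showing that the latter sits \emph{over} the former through the evident reduct functors.

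First I would dispatch the two unnumbered assertions. That the $\{+,\lor,\land,0,1,-1\}$-reduct of a {\ulg} satisfies \ref{ax:M1-gen}--\ref{ax:U3} is routine: \ref{ax:M1-gen} is distributivity of the underlying lattice of an $\ell$-group; \ref{ax:M2-gen} and \ref{ax:M3-gen} hold since $+$ is a commutative group operation whose translations are lattice automorphisms; \ref{ax:U1} is the defining property of $-1$; and \ref{ax:U2}, \ref{ax:U3} follow from $1$ being a strong unit, the lower bounds being obtained by applying the strong-unit property to $-x$. That the $\{\oplus,\odot,\lor,\land,0,1\}$-reduct of an {\mv} satisfies \ref{ax:A1}--\ref{ax:A7} is exactly the observation recorded at the start of this section (see \cite{CignoliOttavianoMundici00}), so such a reduct is an {\mvm}.

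The engine for the two numbered claims is a compatibility square. For a {\ulg} $\mathbf{G}$ with unit $1$, write $\Gamma_{\mathrm{MV}}(\mathbf{G})$ for the {\mv} on $[0,1]$ produced by Mundici's functor, whose derived operations are $x \oplus y = (x+y)\land 1$ and $x \odot y = (x+y-1)\lor 0$. Comparing with the definition of $\Gamma$, one sees that $\Gamma$ applied to the {\ulm}-reduct of $\mathbf{G}$ is \emph{literally} the {\mvm}-reduct of $\Gamma_{\mathrm{MV}}(\mathbf{G})$: both have universe $\{x \mid 0 \le x \le 1\}$ and the same $\oplus,\odot,\lor,\land,0,1$. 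Since $\Gamma$ and $\Gamma_{\mathrm{MV}}$ both act on morphisms by restriction, and the reduct functors leave underlying maps unchanged, this identification is functorial; that is, the square formed by $\Gamma_{\mathrm{MV}}$, $\Gamma$, and the two reduct functors commutes. I expect this verification to be the main (albeit elementary) obstacle, as it is the single place where the definitions of the two $\Gamma$'s must be matched.

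Granting the square, part~\ref{i:corr-reducts} is formal. Given an isomorphic copy of a reduct of a {\ulg} $\mathbf{G}$, the square shows $\Gamma$ carries it to an isomorphic copy of the {\mvm}-reduct of $\Gamma_{\mathrm{MV}}(\mathbf{G})$, i.e.\ a reduct of an {\mv}. Conversely, every {\mv} is up to isomorphism $\Gamma_{\mathrm{MV}}(\mathbf{G})$ for some {\ulg} $\mathbf{G}$ by Mundici's theorem, so using $\Xi\Gamma \cong \mathrm{Id}$ the functor $\Xi$ sends the {\mvm}-reduct of such an {\mv} back to an isomorphic copy of the {\ulm}-reduct of $\mathbf{G}$. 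Thus $\Gamma$ and $\Xi$ restrict to functors between the two full, isomorphism-closed subcategories of reducts, and since they are already quasi-inverse on the whole categories they remain quasi-inverse on these subcategories. For part~\ref{i:corr-subreducts} I would upgrade this along embeddings using \cref{p:preservation}. A subreduct of a {\ulg} is, up to isomorphism, an embedding $\mathbf{S} \hookrightarrow$ ({\ulm}-reduct of $\mathbf{G}$) of a {\ulm} $\mathbf{S}$; applying $\Gamma$ and using that $\Gamma$ preserves injectivity yields an embedding of $\Gamma(\mathbf{S})$ into $\Gamma$ of that reduct, which by the square is the {\mvm}-reduct of an {\mv}, so $\Gamma(\mathbf{S})$ is an isomorphic copy of a subreduct of an {\mv}. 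The reverse direction is symmetric, replacing $\Gamma$ by $\Xi$, Mundici's functor by its inverse, and invoking that $\Xi$ preserves injectivity; as before the two restrictions are automatically quasi-inverse, giving the desired equivalence.
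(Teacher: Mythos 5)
Your proposal is correct and takes essentially the same route as the paper: for part~\eqref{i:corr-subreducts} the paper argues exactly as you do, deducing it from part~\eqref{i:corr-reducts} together with the preservation of injectivity in \cref{p:preservation}. For the unnumbered assertions and part~\eqref{i:corr-reducts} the paper simply cites \cite[Appendix~A]{Abbadini2021-article}, and the compatibility square between $\Gamma$ and Mundici's functor that you verify is precisely the content of that citation, so you have only filled in details the paper outsources.
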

	
	\begin{proof}
		Up to \eqref{i:corr-reducts} is shown in \cite[Appendix~A]{Abbadini2021-article} (see \cite[Chapter~4, Section~8]{Abbadini2021-thesis} for a more detailed proof).
		\eqref{i:corr-subreducts} follows from \eqref{i:corr-reducts}, together with the fact that $\Gamma$ and $\Xi$ preserve injectivity of morphisms (\cref{p:preservation}).
	\end{proof}

	\section{Main Result}
	
	We recall the following folklore result.
	
	\begin{lemma} \label{l:g-generation}
		Let $M$ be a generating subset of an abelian group $\mathbf{G}$, and suppose that $M$ is closed under $+$ and $0$.
		For every $z \in G$, there are $x, y \in M$ such that $z = x - y$.
	\end{lemma}
	
	\Cref{l:g-generation} generalizes in the setting of (unital) abelian $\ell$-groups, as follows.
	
	\begin{lemma}\label{l:lg-generation}
		Let $M$ be a generating subset of an abelian $\ell$-group (resp.\ unital abelian $\ell$-group) $\mathbf{G}$, and suppose that $M$ is closed under $+$, $\lor$, $\land$ and $0$ (resp.\ $+$, $\lor$, $\land$, $0$, $1$ and $-1$).
		For every $z \in G$, there are $x, y \in M$ such that $z = x - y$.
	\end{lemma}
	
	\begin{proof}
		It is enough to prove that the set $\{ x - y \mid x, y \in M \}$
		contains $M$ and is closed under $+$, $\lor$, $\land$, $0$ and $-$ (resp.\ $+$, $\lor$, $\land$, $0$, $1$, $-1$ and $-$).
		These verifications are easy; we only show closure under $\lor$:
		\[
		(x - y) \lor (x' - y') = ((x + y') \lor (x' + y)) - (y + y'). \qedhere
		\]
	\end{proof}
	
	Recall that a monoid $\mathbf{M}$ is called \emph{cancellative} if, for all $x, y, z \in M$, the condition $x + z = y + z$ implies $x = y$.
	
	\begin{proposition} \label{p:cancellative-monoid}
		An algebra $\langle M, +, \lor, \land, 0 \rangle$ is isomorphic to a subreduct of an abelian $\ell$-group if and only if it is a cancellative commutative distributive $\ell$-monoid.
	\end{proposition}
	
	\begin{proof}
		The left-to-right implication follows from the fact that the conditions that define cancellative commutative distributive $\ell$-monoids are quasi-equations that hold in all abelian $\ell$-groups.
		
		For the converse implication, let $\mathbf{M}$ be a cancellative commutative distributive $\ell$-monoid.
		Following a standard procedure (see \cite{Steinitz}) motivated by \cref{l:g-generation}, one embeds $\mathbf{M}$ into its so-called Grothendieck group (\cite{Grothendieck}), or algebra of fractions.
		The elements of $G$ are the equivalence classes of the relation $\sim$ on $M \times M$ defined by
		\[
		(a,b) \sim (c,d) \iff a + d = c + b.
		\]
		For $a, b \in M$, we write $[a,b]$ for the equivalence class of $(a,b)$ with respect to $\sim$.
		The group operations on $G$ are defined as follows:
		\begin{align*}
			[a,b] + [c,d] & \df[a + c, b + d];\\
			0 & \df [0,0];\\
			-[a,b]& \df [b,a].
		\end{align*}
		One then defines the following additional operations on $G$:
		\begin{align*}
			[a,b] \lor [c,d] & \df [(a+d) \lor (c+b), b+d];\\
			[a,b] \land [c,d] & \df [(a+d) \land (c+b),b+d].
		\end{align*}
		The algebra $\mathbf{G} = \langle G, +, \lor, \land, 0, - \rangle$ is an abelian $\ell$-group, and the map
		\begin{align*}
			\iota \colon M & \longrightarrow G\\
			x & \longmapsto [x,0]
		\end{align*}
		is an injective homomorphism from $\langle M, +, \lor, \land, 0 \rangle$ to $\langle G, +, \lor, \land, 0 \rangle$.
		This shows that $\mathbf{M}$ is isomorphic to a subreduct of $\mathbf{G}$.
	\end{proof}

	\begin{proposition} \label{p:cancellative-unital-monoid}
		An algebra $\langle M, +, \lor, \land, 0, 1, -1 \rangle$ is isomorphic to a subreduct of a unital abelian $\ell$-group if and only if it is a cancellative {\ulm}.
	\end{proposition}
	
	\begin{proof}
		Any subreduct of a unital abelian $\ell$-group is a cancellative {\ulm}: \eqref{ax:U3} holds because, for any $n \in \N \setminus \{0\}$, the validity of an equation $-n \leq x \leq n$ is preserved by subalgebras.
		All the remaining axioms are quasi-equations, so their validity is preserved.
		
		For the converse implication, given a cancellative {\ulm} $\langle M, +, \lor, \land, 0 \rangle$, we consider the {\ulg} $\langle G, +, \lor, \land, 0, - \rangle$ built from $\langle M, +, \lor, \land, 0 \rangle$ as in the proof of \cref{p:cancellative-monoid}.
		We show that the element $[1,0]$ is a unit of the abelian $\ell$-group $\langle G, +, \lor, \land, 0, - \rangle$.
		We have $[0,0] \lor [1,0] = [1,0]$, and thus $[0,0] \leq [1,0]$.
		Moreover, given $a,b \in M$, let $n \in \N \setminus \{0\}$ be large enough so that $a \leq n$ and $b \geq -n$.
		Then $a - n \leq 0 \leq b + n$, and thus
		\[
		[a,b] \land [2n,0] = [a,b] \land [n,-n] = [(a -n) \land (b + n), b - n] = [a - n, b -n] = [a,b].
		\]
		Therefore, $[a,b] \leq [2n,0] = 2n[1,0]$.
		This shows that $[1,0]$ is a unit.
		Denoting the element $[1,0]$ with $1$ and its group-inverse $[-1,0]$ with $-1$, we conclude that the algebra $\langle G, +, \lor, \land, 0, 1, - 1, -\rangle$ is a {\ulg}.
		The function $\iota \colon M \hookrightarrow G$ from the proof of \cref{p:cancellative-unital-monoid} clearly preserves also the constants $1$ and $-1$.
	\end{proof}
	
	\begin{proposition}\label{p:through-gamma}
		The following are equivalent for a {\ulm} $\mathbf{M}$.
		\begin{enumerate}
			
			\item \label{i:cancellative}
			$\mathbf{M}$ is cancellative.
			
			\item \label{i:cancellative-1}
			For all $x, y, z \in \{w \in M \mid 0 \leq w \leq 1\}$, if $x + z = y + z$ then $x = y$.
			
			\item \label{i:cancellative-mv}
			For all $x, y, z \in \Gamma(\mathbf{M})$, if $x\oplus z=y\oplus z$ and $x\odot z=y\odot z$ then $x=y$.
			
		\end{enumerate}
	\end{proposition}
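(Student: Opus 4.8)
The plan is to prove the cycle \eqref{i:cancellative}$\Rightarrow$\eqref{i:cancellative-1}$\Rightarrow$\eqref{i:cancellative-mv}$\Rightarrow$\eqref{i:cancellative}, using throughout that $\{w\in M\mid 0\le w\le 1\}=\Gamma(\mathbf{M})$, so that \eqref{i:cancellative-1} and \eqref{i:cancellative-mv} are two ways of phrasing cancellation among elements of $\Gamma(\mathbf{M})$. The implication \eqref{i:cancellative}$\Rightarrow$\eqref{i:cancellative-1} is immediate, since cancellation in $\langle M,+,0\rangle$ restricts to the subset $\Gamma(\mathbf{M})$. For \eqref{i:cancellative-1}$\Leftrightarrow$\eqref{i:cancellative-mv} I would combine the identities $x\oplus z=(x+z)\land 1$ and $x\odot z=(x+z-1)\lor 0$ (so that $x\odot z+1=(x+z)\lor 1$ by \cref{ax:U1}) with the cancellation law of distributive lattices: in a distributive lattice, $p\land c=q\land c$ and $p\lor c=q\lor c$ force $p=q$. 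Given $x\oplus z=y\oplus z$ and $x\odot z=y\odot z$ with $x,y,z\in\Gamma(\mathbf{M})$, taking $p\df x+z$, $q\df y+z$ and $c\df 1$ yields $p\land 1=q\land 1$ and $p\lor 1=q\lor 1$, hence $x+z=y+z$, and then \eqref{i:cancellative-1} gives $x=y$; conversely $x+z=y+z$ trivially forces both $x\oplus z=y\oplus z$ and $x\odot z=y\odot z$, so \eqref{i:cancellative-mv} gives \eqref{i:cancellative-1}.

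The substantial implication is \eqref{i:cancellative-1}$\Rightarrow$\eqref{i:cancellative}, i.e.\ lifting cancellation from $\Gamma(\mathbf{M})$ to all of $M$. First I would reduce to cancelling a single element of $\Gamma(\mathbf{M})$. Given $x+z=y+z$ with $z\in M$, \cref{ax:U3} provides $k$ with $z+k\cdot 1\ge 0$; adding $k\cdot 1$ to both sides and setting $z'\df z+k\cdot 1$, the identity $w=(w\land 1)+((w-1)\lor 0)$ decomposes $z'$ into a finite sum $w_1+\dots+w_m$ of elements of $\Gamma(\mathbf{M})$ (finiteness again by \cref{ax:U3}). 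Thus it suffices to prove the single-layer cancellation statement: for all $a,b\in M$ and $w\in\Gamma(\mathbf{M})$, $a+w=b+w$ implies $a=b$. Cancelling $w_m,\dots,w_1$ in turn then yields $x=y$.

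To prove single-layer cancellation I would invoke the reconstruction principle used inside the proof of \cref{p:preservation}: an element of a {\ulm} is determined by its truncations $\tau_n(w)\df((w-n)\lor 0)\land 1\in\Gamma(\mathbf{M})$, $n\in\Z$. Hence it is enough to show $\tau_n(a)=\tau_n(b)$ for every $n\in\Z$. The crux is a truncated-addition analysis: for $w\in\Gamma(\mathbf{M})$ one checks, splitting according to the two ``carry'' states, that $\tau_n(a+w)=\tau_n(a)\oplus w$ whenever $\tau_{n-1}(a)=1$, while $\tau_n(a+w)=\tau_{n-1}(a)\odot w$ whenever $\tau_n(a)=0$. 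At a level $n$ that is \emph{active} for $a$ (meaning $\tau_{n-1}(a)=1$ and $\tau_{n+1}(a)=0$), reading the hypothesis $a+w=b+w$ at levels $n$ and $n+1$ then produces $\tau_n(a)\oplus w=\tau_n(b)\oplus w$ and $\tau_n(a)\odot w=\tau_n(b)\odot w$ with the common element $w\in\Gamma(\mathbf{M})$; by \eqref{i:cancellative-mv} (equivalent to \eqref{i:cancellative-1} by the previous paragraph) this gives $\tau_n(a)=\tau_n(b)$.

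I expect the bookkeeping of the carry states across levels to be the main obstacle. No single truncation level simultaneously delivers both the $\oplus$-equation and the $\odot$-equation that \eqref{i:cancellative-mv} requires, and $\Gamma(\mathbf{M})$ carries no negation with which to dualise one into the other; so the argument must pair a level with its neighbour and, crucially, must check that the equality $a+w=b+w$ forces the active levels (and more generally the carry states) of $a$ and $b$ to coincide, so that the paired equations are about the \emph{same} pair of truncations. Verifying this alignment at every level, and the saturated boundary cases where $\tau_n=0$ or $\tau_n=1$, is the delicate part of the proof.
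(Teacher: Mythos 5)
Your treatment of \eqref{i:cancellative}$\Rightarrow$\eqref{i:cancellative-1} and of the equivalence \eqref{i:cancellative-1}$\Leftrightarrow$\eqref{i:cancellative-mv} is correct (the lattice-cancellation argument $p\land 1=q\land 1$, $p\lor 1=q\lor 1\Rightarrow p=q$ is exactly the right way to make precise what the paper dismisses as ``clear''), and your reduction of \eqref{i:cancellative-1}$\Rightarrow$\eqref{i:cancellative} to cancelling a single element $w$ with $0\le w\le 1$ --- translate by $k\cdot 1$ to make $z$ positive, then split off unit-interval layers --- is the same two-step reduction the paper performs. The gap is in the one step that carries all the weight: single-layer cancellation, i.e.\ $a+w=b+w\Rightarrow a=b$ for arbitrary $a,b\in M$ and $w\in\Gamma(\mathbf{M})$. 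Your plan is to recover $\tau_n(a)=\tau_n(b)$ for the truncations $\tau_n(x)\df((x-n)\lor 0)\land 1$ by computing $\tau_n(a+w)$ from $\tau_n(a)$ and $w$ via carry states, and to extract the pair of equations needed for \eqref{i:cancellative-mv} at an ``active'' level $n$ where $\tau_{n-1}(a)=1$ and $\tau_{n+1}(a)=0$. This cannot work as stated: in a general {\ulm} an element need have no active level at all. Take $M=\R^2$ with pointwise order and $a=(0.5,3.5)$: $\tau_{n-1}(a)=1$ forces $n\le 0$ while $\tau_{n+1}(a)=0$ forces $n\ge 3$, so no level is active. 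More fundamentally, you need $\tau_n(a)=\tau_n(b)$ at \emph{every} level to conclude $a=b$, not just at levels where the carry conditions are crisp, and the hypotheses $\tau_{n-1}(a)=1$ or $\tau_n(a)=0$ under which your two identities for $\tau_n(a+w)$ would hold are genuinely restrictive. You flag this yourself as ``the delicate part''; it is not bookkeeping, it is the missing proof.

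The paper avoids the carry analysis entirely by running the truncation in the other direction: instead of expressing $\tau_n(a+w)$ in terms of $\tau_n(a)$, it expresses $\tau_n(a)+w$ as a lattice polynomial in $a+w$ and $w$, namely
\[
\tau_n(a)+w=\bigl(((a-n)\lor 0)\land 1\bigr)+w=((-n+a+w)\lor w)\land(1+w),
\]
by distributivity of $+$ over $\lor$ and $\land$. The right-hand side depends on $a$ only through $a+w$, so $a+w=b+w$ gives $\tau_n(a)+w=\tau_n(b)+w$ for every $n\in\Z$; since $\tau_n(a)$, $\tau_n(b)$ and $w$ all lie in $\{v\in M\mid 0\le v\le 1\}$, hypothesis \eqref{i:cancellative-1} cancels $w$ directly, and $\tau_n(a)=\tau_n(b)$ for all $n$ yields $a=b$. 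If you replace your carry-state analysis with this identity (this is the paper's \cref{cl:z-unital}), the rest of your outline goes through.
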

	
	\begin{proof}
		The implication \eqref{i:cancellative} $\Rightarrow$ \eqref{i:cancellative-1} is trivial, and the implication \eqref{i:cancellative-1} $\Rightarrow$ \eqref{i:cancellative-mv} is easy.
		
		Let us suppose \eqref{i:cancellative-mv}, and let us prove \eqref{i:cancellative-1}.
		Let $x,y,z \in \{w \in M \mid 0 \leq w \leq 1\}$ be such that $x + z = y + z$.
		We claim that, for every $n \in \Z$, we have
		\begin{equation} \label{eq:sum}
			((x + z - n) \lor 0) \land 1 = ((y + z - n) \lor 0) \land 1. \tag{\dag}
		\end{equation}
		For $n = 0$, resp.\ $n = 1$, \eqref{eq:sum} boils down to $x \oplus z = y \oplus z$, resp.\ $x \odot z = y \odot z$; both of these conditions are true by hypothesis.
		Since $x$, $y$ and $z$ are below $1$, both sides of \eqref{eq:sum} equal $0$ for $n \geq 2$.
		Since $x$, $y$ and $z$ are above $0$, both sides of \eqref{eq:sum} equal $1$ for $n \leq -1$.
		This settles the claim.
		By \cref{p:good}, we deduce $x = y$.
		
		We are left to prove the implication \eqref{i:cancellative-1} $\Rightarrow$ \eqref{i:cancellative}.
		
		Let us assume \eqref{i:cancellative-1}.
		Let us start by proving the following claim, where we have added the hypothesis `$0 \leq z \leq 1$'; we will then weaken this hypothesis to `$0 \leq z$', and we will finally prove the general case.
		
		\begin{claim} \label{cl:z-unital}
			For all $x, y, z \in M$ with $0 \leq z \leq 1$, if $x + z = y + z$ then $x = y$.
		\end{claim}
		
		\begin{claimproof}
			Let $x, y, z \in M$ with $0 \leq z \leq 1$, and suppose $x + z = y + z$.
			Since $x + z = y + z$, we have, for every $n \in \Z$,
			\begin{equation} \label{e:fundamental}
				((- n + x + z) \vee z) \wedge (1 + z) = ((-n + y + z) \vee z) \wedge (1 + z).
			\end{equation}
			By distributivity of $+$ over $\vee$ and $\wedge$, \cref{e:fundamental} can be written as
			\begin{equation} \label{e:fundamental-rewritten}
				(((- n + x) \lor 0) \land 1) + z = (((- n + y) \lor 0) \land 1) + z.
			\end{equation}
			From \cref{e:fundamental-rewritten}, using \eqref{i:cancellative-1}, we deduce
			\begin{equation} \label{e:strip}
				((- n + x) \lor 0) \land 1= ((- n + y) \lor 0) \land 1.
			\end{equation}
			Since \cref{e:strip} holds for every $n \in \Z$, by \cref{p:good} we have $x = y$.
		\end{claimproof}
		
		\begin{claim} \label{cl:z-positive}
			For all $x, y, z \in M$ with $z \geq 0$, if $x + z = y + z$ then $x = y$.
		\end{claim}
		
		\begin{claimproof}
			We prove this inductively on $n \in \N \setminus\{0\}$ such that $z \leq n$.
			The case $n = 1$ is \cref{cl:z-unital}.
			Let $n \in \N \setminus\{0,1\}$, and let us suppose that the statement holds for $n - 1$.
			Let $x, y, z \in M$ with $0 \leq z \leq n$, and suppose $x + z = y + z$.
			Set $w \df z \wedge (n - 1)$, and $v \df (z - (n-1)) \vee 0$.
			Then
			\begin{align*}
				w + v & = (z \wedge (n - 1)) + ((z - (n-1)) \vee 0)\\
				& = (z \wedge (n - 1)) + (z \vee (n - 1)) - (n-1)\\
				& = z + (n - 1) - (n-1)\\
				& = z,
			\end{align*}
			and thus the equality $x + z = y + z$ can be written as
			\[
			x + w + v = y + w + v.
			\]
			By \cref{cl:z-unital}, using the fact that $0 \leq v \leq 1$, we deduce
			\[
			x + w = y + w.
			\]
			By inductive hypothesis, using the fact that $0 \leq w \leq n-1$, we conclude $x = y$.
		\end{claimproof}
		
		Let $x, y, z \in M$ and suppose $x + z = y + z$.
		The element $z$ can be written as $u + n$, where $n \in \Z$ and $u \geq 0$.
		Then the equality $x + z = y + z$ can be written as $x + u + n = y + u + n$, which is equivalent to $x + u = y + u$.
		By \cref{cl:z-positive}, we deduce $x = y$.
		This proves \eqref{i:cancellative}: $\mathbf{M}$ is cancellative.
	\end{proof}
	
	We arrive at our main result: a finite axiomatization of the class of positive MV-algebras.
	
	\begin{theorem} \label{t:axiomatisation}
		The positive MV-algebras are precisely the {\mvms} that satisfy, for all $x$, $y$, and $z$,
		\begin{center}
			if $x\oplus z=y\oplus z$ and $x\odot z=y\odot z$, then $x=y$.
		\end{center}
	\end{theorem}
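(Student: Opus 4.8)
The plan is to assemble the statement from the structural results already in place, all funnelled through the equivalence $\Gamma \colon \ULM \longleftrightarrow \MVM \colon \Xi$. The pivotal observation is that, since $\Gamma$ is an equivalence of categories (\cref{t:Gam-is-equivalence}) and hence essentially surjective, every {\mvm} $\mathbf{A}$ is isomorphic to $\Gamma(\mathbf{M})$ for some {\ulm} $\mathbf{M}$; concretely one may take $\mathbf{M} \df \Xi(\mathbf{A})$. Fixing such an $\mathbf{M}$, I would reduce the whole proof to chaining a string of equivalences for the pair $(\mathbf{A},\mathbf{M})$ with $\mathbf{A} \cong \Gamma(\mathbf{M})$.

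First I would unwind the definition: $\mathbf{A}$ is a positive MV-algebra exactly when it is an isomorphic copy of a subreduct of an {\mv}. By \cref{l:correspondence}\eqref{i:corr-subreducts}, the restricted equivalence turns this into the assertion that $\mathbf{M} \cong \Xi(\mathbf{A})$ is an isomorphic copy of a subreduct of a {\ulg}. By \cref{p:cancellative-monoid}, the latter holds precisely when $\mathbf{M}$ is a cancellative {\ulm}. Finally, \cref{p:through-gamma} (equivalence of \eqref{i:cancellative} and \eqref{i:cancellative-mv}) tells us that $\mathbf{M}$ is cancellative if and only if $\Gamma(\mathbf{M}) \cong \mathbf{A}$ satisfies the displayed quasi-equation. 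Reading this chain from one end to the other delivers exactly the claimed biconditional.

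For the forward direction I would also record that a positive MV-algebra is genuinely an {\mvm}, which is what makes the passage $\mathbf{A} \cong \Gamma(\mathbf{M})$ legitimate: the $\{\oplus,\odot,\lor,\land,0,1\}$-reduct of an {\mv} is an {\mvm} (\cref{l:correspondence}), and since {\mvms} are defined equationally (\cref{d:MVM}) they are closed under subalgebras. Because quasi-equations are invariant under isomorphism, the various ``$\cong$'' appearing in the chain cause no trouble.

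I do not expect a genuine obstacle: the substantive content is already carried by the earlier results --- the inductive argument in \cref{p:through-gamma} that descends general cancellation to cancellation on $\Gamma(\mathbf{M})$, and the characterization of inverse-free reducts of $\ell$-groups underlying \cref{p:cancellative-monoid}. The only thing needing attention is bookkeeping: applying each restricted equivalence to the correct class (subreducts of {\mvs} against subreducts of {\ulgs}), and explicitly invoking essential surjectivity of $\Gamma$ so that the argument covers an arbitrary {\mvm} rather than only one already presented in the form $\Gamma(\mathbf{M})$.
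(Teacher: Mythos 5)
Your proposal is correct and follows essentially the same route as the paper: both arguments pass through $\Xi(\mathbf{A})$ and combine \cref{l:correspondence}, \cref{p:cancellative-monoid}, and the equivalence of \eqref{i:cancellative} with \eqref{i:cancellative-mv} in \cref{p:through-gamma}. The only cosmetic difference is that you present the argument as a single chain of biconditionals (plus the explicit remark that positive MV-algebras are {\mvms}), whereas the paper writes out the two implications separately.
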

	
	\begin{proof}
		For the left-to-right implication, we note that the $[0,1]$ satisfies the equations defining MV-monoidal algebras and the quasi-equation in the statement.
		Therefore, by Proposition~\ref{p:generation}, the same is true for each positive MV-algebra.

		Let us prove the converse implication.
		Suppose $\mathbf{A} = \langle A, \oplus, \odot, \lor, \land, 0, 1 \rangle$ is an {\mvm}, and suppose that, for all $x, y, z \in A$, if $x \oplus z = y \oplus z$ and $x \odot z = y \odot z$, then $x = y$.
		Using the implication \eqref{i:cancellative-mv} $\Rightarrow$ \eqref{i:cancellative} in \cref{p:through-gamma}, we deduce that $\Xi(\mathbf{A})$ is cancellative.
		By \cref{p:cancellative-unital-monoid}, $\Xi(\mathbf{A})$ is isomorphic to a subreduct of a {\ulg}.
		By \cref{l:correspondence}, $\mathbf{A}$ is isomorphic to a subreduct of an {\mv}, i.e.\ $\mathbf{A}$ is a positive MV-algebra.
	\end{proof}
	
	\Cref{t:axiomatisation} provides a finite quasi-equational axiomatization for the class of positive MV-algebras consisting of the equational axioms defining MV-monoidal algebras (\ref{ax:A1}--\ref{ax:A7}) together with the quasi-equation
	\begin{center}
		if $x\oplus z=y\oplus z$ and $x\odot z=y\odot z$, then $x=y$,
	\end{center}
	which can be seen as an appropriate version of the cancellation property.

	\appendix
	
	\section{Free MV-extensions}

	By standard results in general algebra, the forgetful functor $U$ from MV-algebras to positive MV-algebras has a left adjoint $F$.
	For every positive MV-algebra $\mathbf{A}$, it is immediate that the component $\iota_{\mathbf{A}} \colon \mathbf{A} \to UF(\mathbf{A})$ at $\mathbf{A}$ of the unit is injective, and that the image of $\iota_{\mathbf{A}}$ generates the MV-algebra $F(\mathbf{A})$.
	We speak of the pair $(F(\mathbf{A}), \iota_{\mathbf{A}})$ (or simply of $F(\mathbf{A})$, leaving $\iota_{\mathbf{A}}$ understood) as the \emph{free MV-extension} of $\mathbf{A}$.
	In this section we prove that, given a positive subreduct $\mathbf{A}$ of an MV-algebra $\mathbf{B}$ such that $A$ generates $\mathbf{B}$, and denoting with $i$ the inclusion of $A$ into $B$, $(\mathbf{B}, i)$ is the free MV-extension of $\mathbf{A}$; moreover, under the same conditions, for every $x \in B$, there are $a_1, \dots, a_n, b_1, \dots, b_n \in A$ such that $x = \bigoplus_{j = 1}^{n} (a_j \odot \lnot b_j)$.
	
	The following Lemma corresponds, essentially, to the fact that Mundici's bijection between $\mathbf{G}$ and $\Xi(\Gamma(\mathbf{G}))$ preserves $+$.
	We refer to \cite[Proposition~4.67]{Abbadini2021-thesis} for a proof in the setting of {\ulms}.
	
	\begin{lemma} \label{l:sum-iso}
		Let $\mathbf{G}$ be a {\ulg}, and let $x, y \in G$.
		For each $n \in \Z$, set $x_n \coloneqq ((x -n) \lor 0) \land 1)$ and $y_n \coloneqq ((x -n) \lor 0) \land 1)$.
		We have
		\[
		((x + y) \lor 0) \land 1 = \bigoplus_{n \in \Z} (x_n \odot y_{-n-1}) = \bigodot_{n \in \Z} (x_n \oplus y_{-n}).
		\]
	\end{lemma}
	\noindent (The expression $\bigoplus_{n \in \Z} (x_n \odot y_{-n-1})$ makes sense because for all but finitely many $n \in \Z$ we have $x_n \odot y_{-n-1} = 0$.
	Analogously, the expression $\bigodot_{n \in \Z} (x_n \oplus y_{-n})$ makes sense because for all but finitely many $n \in \Z$ we have $x_n \oplus y_{-n} = 1$.)
	
	\begin{lemma} \label{l:sub}
		Let $\mathbf{G}$ be a unital abelian $\ell$-group.
		Set
		\[
		\mathcal{M} \coloneqq \{ M \subseteq G \mid M \text{ is closed under }+, \lor, \land, 0, 1, -1 \}
		\]
		and
		\[
		\mathcal{A} \coloneqq \{ A \subseteq \Gamma(\mathbf{G}) \mid A \text{ is closed under } \oplus, \odot, \lor, \land, 0, 1 \}.
		\]
		\begin{enumerate}
			\item \label{i:bijection} The sets $\mathcal{M}$ and $\mathcal{A}$ are in bijection, as witnessed by the functions
			\begin{align*}
				\mathcal{M} & \longleftrightarrow \mathcal{A}\\
				M &\longmapsto \{ x \in M \mid 0 \leq x \leq 1 \}\\
				\{ x \in G \mid \forall n \in \Z\  ((x - n) \lor 0) \land 1 \in A \} & \longmapsfrom A.
			\end{align*}
			\item \label{i:restriction}
			The bijection in \eqref{i:bijection} restricts to the subsets $\{M \in \mathcal{M} \mid M \text{ is closed under }-\}$ and $\{A \in \mathcal{A} \mid A \text{ is closed under }\lnot\}$.
			\item \label{i:generate}
			For every $M \in \mathcal{M}$, $M$ generates the unital abelian $\ell$-group $\mathbf{G}$ if and only if $\{ x \in M \mid 0 \leq x \leq 1 \}$ generates the MV-algebra $\Gamma(\mathbf{G})$.
		\end{enumerate}
	\end{lemma}
	
	\begin{proof}
		The fact that the function $f \colon \mathcal{M} \to \mathcal{A}$ is well-defined is immediate.
		
		Let us prove that the function $g \colon \mathcal{A} \to \mathcal{M}$ is well-defined.
		Let $A \in \mathcal{A}$, and set
		\[
		M \coloneqq \{ x \in G \mid \forall n \in \Z\  ((x - n) \lor 0) \land 1 \in A \}.
		\]
		The set $M$ is closed under $+$ by \cref{l:sum-iso}.
		Moreover, it is closed under $\lor$ because, for all $x, y \in M$, we have $((x \lor y) \lor 0) \land 1 = ((x  \lor 0) \land 1) \lor ((x  \lor 0) \land 1)$.
		Analogously, it is closed under $\land$.
		It is not difficult to prove that $0, 1, -1 \in M$.
		Therefore, $M \in \mathcal{M}$, and thus $g$ is well-defined.
		
		It is easy to see that the composite $f \circ g \colon \mathcal{A} \to \mathcal{A}$ is the identity on $\mathcal{A}$.
		To prove that $g \circ f$ is the identity on $\mathcal{M}$, let $M \in \mathcal{M}$.
		We shall prove that
		\[
		M = \{x \in G \mid \forall n \in \Z\ (x - n) \lor 0) \land 1 \in M\}.
		\]
		The left-to-right inclusion is immediate.
		For the converse inclusion, let $x \in G$ be such that, for every $n \in \Z$, $((x - n) \lor 0) \land 1 \in M$.
		Let $n \in \N$ be such that $-n \leq x \leq n$.
		Then (cf.\ \cref{l:sum-slices})
		\[
		x = -n + \sum_{i = -n}^{n-1} (((x - i) \lor 0) \land 1),
		\]
		and thus $x \in M$.
		This proves \eqref{i:bijection}.
		
		If $M \in \mathcal{M}$ is closed under $-$, $f(M)$ is easily seen to be closed under $\lnot$.
		If $A \in \mathcal{A}$ is closed under $\lnot$, $g(A)$ is closed under $-$ because, for every $x \in g(A)$ and every $n \in \Z$, we have
		\[
		((-x - n) \lor 0) \land 1 = 1 - (((x + (n+1)) \lor 0) \land 1) = \lnot (((x + (n+1)) \lor 0) \land 1).
		\]
		This proves \eqref{i:restriction}.
		
		Since the bijection in \eqref{i:bijection} preserves the inclusion order in both directions, we have \eqref{i:generate}.
	\end{proof}
	
	It is well known that, if we consider a Boolean algebra $\mathbf{B}$ generated by a bounded distributive sublattice $\mathbf{D}$, any element of $B$ can be written as a finite join of elements of the form $x \wedge \neg y$ (or, equivalently, as a finite meet of elements of the form $x \vee \neg y$) for $x,y \in D$. 
	An analogous result holds for positive MV-algebras and MV-algebras. Whereas the result for Boolean algebras can be derived by standard applications of the distributive and De Morgan's laws, the corresponding one for MV-algebras is not straightforward.
	We present it in the next theorem. 
	
	\begin{theorem} \label{t:free-extension-MV}
		Let $\mathbf{A}$ be a positive subreduct of an MV-algebra $\mathbf{B}$, and suppose that $A$ generates the MV-algebra $\mathbf{B}$.
		For every $x \in B$ there are $n,m \in \N$ and $s_1, \dots, s_n, t_1, \dots, t_n, u_1, \dots, u_m, v_1, \dots, v_m \in A$ such that
		\[
		x = \bigoplus_{i = 1}^{n} s_i \odot \lnot t_i = \bigodot_{i = 1}^{m} u_i \oplus \lnot v_i.
		\]
		%
	\end{theorem}
	
	\begin{proof}
		By \cref{l:sub}, the set
		\[
		M \coloneqq \{ x \in G \mid \forall n \in \Z\  ((x - n) \lor 0) \land 1 \in A \}
		\]
		is closed under $+$, $\lor$, $\land$, $0$, $1$ and $-1$ and generates the unital abelian $\ell$-group $\mathbf{G}$.
		Let $z \in B$.
		By \cref{l:lg-generation}, there are $x,y \in M$ such that $z = x - y$.
		For every $n \in \N$, we have
		\begin{align*}
			((-y - n + 1) \lor 0) \land 1 & = 1 - (1 - (((-y - n + 1) \lor 0) \land 1))\\
			& = 1 - (1 + (((y + n - 1) \land 0) \lor -1))\\
			& = 1 - (((y + n) \land 1) \lor 0)\\
			& = 1 - (((y + n) \lor 0) \land 1)\\
			& = \lnot(((y + n) \lor 0) \land 1).
		\end{align*}
		Therefore, by \cref{l:sum-iso}, we have
		\begin{align*}
			z  & = \bigoplus_{n \in \Z} \big((((x - n) \lor 0) \land 1) \odot (((-y - n + 1) \lor 0) \land 1)\big)\\
			& = \bigoplus_{n \in \Z} \big((((x - n) \lor 0) \land 1) \odot \lnot(((y + n) \lor 0) \land 1)\big).
		\end{align*}
		Since $x,y \in M$, $((x - n) \lor 0) \land 1, ((y + n) \lor 0) \land 1 \in A$.
		This proves the first equality in the statement.
		The second one is analogous.
	\end{proof}

	\begin{theorem}\label{t:free-extension-lg}
		Let $M$ be a generating subset of a unital abelian $\ell$-group $\mathbf{G}$, and suppose that $M$ is closed under $+$, $\lor$, $\land$, $0$, $1$ and $-1$.
		For every unital Abelian $\ell$-group $\mathbf{H}$ and every function $f \colon M \to H$ that preserves $+$, $\lor$, $\land$, $0$, $1$ and $-1$, there exists a unique morphism $g \colon \mathbf{G} \to \mathbf{H}$ of unital abelian $\ell$-groups that extends $f$.
	\end{theorem}
	
	\begin{proof}
		This follows from \cref{l:lg-generation}. For every $z \in G$ we define the morphism $g$ as $g(z) = f(x) - f(y)$ where $x$ and $y$ are elements of $M$ such that $z=x-y$. The fact that $g$ is a well-defined morphism follows from the fact that $x - y = u - v$ is equivalent to $x + v = u + y$.
	\end{proof}

	The following theorem generalizes an analogous result for bounded distributive lattices, namely that the inclusion of a bounded distributive lattice into a Boolean algebra that is generated by the image of such inclusion is universal, i.e.\ it is a so-called Booleanization, or free Boolean extension: see \cite[Theorem~4.1]{Peremans57} for a version of this result for (not necessarily bounded) distributive lattices.

	\begin{theorem}
		Let $\mathbf{A}$ be a positive MV-algebra, $\mathbf{B}$ an MV-algebra, and $i \colon A \hookrightarrow B$ an injective function that preserves $\oplus$, $\odot$, $\lor$, $\land$, $0$ and $1$ and such that its image generates the MV-algebra $\mathbf{B}$.
		Then $(\mathbf{B}, i)$ is the free MV-extension of $\mathbf{A}$.
	\end{theorem}
	
	\begin{proof}
		Without loss of generality, we may suppose $A \subseteq B$ and $i$ to be the inclusion of $A$ into $B$.
		Let $\mathbf{C}$ be an MV-algebra, and let $f \colon A \to C$ be a function that preserves $\oplus$, $\odot$, $\lor$, $\land$, $0$ and $1$.
		We shall prove that there exists a unique MV-homomorphism $g \colon \mathbf{B} \to \mathbf{C}$ that extends $f$.
		Uniqueness follows from the fact that $A$ generates the MV-algebra $\mathbf{B}$.
		Let us prove existence.
		By the equivalence in \cref{t:Gam-is-equivalence}, we obtain morphisms of {\ulms} $\Xi(i) \colon \Xi(\mathbf{A}) \to \Xi(\mathbf{B})$ and $\Xi(f) \colon \Xi(\mathbf{A}) \to \Xi(\mathbf{C})$.
		By \cref{p:preservation}, $\Xi(i)$ is injective, and we may suppose $\Xi(i)$ to be an inclusion.
		By \cref{t:free-extension-lg}, there exists a unique morphism $g' \colon \Xi(\mathbf{B}) \to \Xi(\mathbf{C})$ of unital abelian $\ell$-groups that extends $\Xi(f)$.
		Then, $g \coloneqq \Gamma(g')$ is an MV-homomorphism that extends $f$.
	\end{proof}

	\section*{Acknowledgments}
	We acknowledge the contribution of L.~M.~Cabrer in shaping the first ideas on positive MV-algebras at the initial stage of this research.
	Marco Abbadini was supported by the Italian Ministry of University and Research through the PRIN project n.\ 20173WKCM5 \emph{Theory and applications of resource sensitive logics}.
	Tom\'a\v{s} Kroupa and Sara Vannucci acknowledge the support by the project 
	Research Center for Informatics
	(CZ.02.1.01/0.0/0.0/16\_019/0000765).


\end{document}